\newtheorem{theorem}{Theorem}[section]
\newtheorem{lemma}[theorem]{Lemma}
\theoremstyle{definition}
\newtheorem{definition}[theorem]{Definition}
\title[Revisited for existence proof of ...]
{Revisited for existence proof of optimal solution in Bernoulli free boundary problem using an energy-gap cost functional} 
\author[Shiouhe Wang, Fang Shen, Yi Yang, Xueshang Feng and Jiansen He]{}
\subjclass{Primary: 35N25, 35R35; Secondary: 49K20, 49S05.}
\keywords{Bernoulli free boundary problem, shape optimization,state problems, optimal solution, energy-gap cost functional, continuity of the state problems, Poincaré-Friedrichs inequality}
\thanks{$^*$Corresponding author: Fang Shen}
\begin{document}
\maketitle

\centerline{\parbox{\textwidth}{\centering\scshape
    Shiouhe Wang$^{{\href{mailto:xhwang@spaceweather.ac.cn}{\textrm{\Letter}}}1,2}$,\ 
    Fang Shen$^{{\href{mailto:fshen@spaceweather.ac.cn}{\textrm{\Letter}}}*1,2}$,\ 
    Yi Yang$^{{\href{mailto:yyang@spaceweather.ac.cn}{\textrm{\Letter}}}1,2}$ \\
    Xueshang Feng$^{{\href{mailto:fengx@spaceweather.ac.cn}{\textrm{\Letter}}}1}$\ and\ 
    Jiansen He$^{{\href{mailto:jshept@qq.com}{\textrm{\Letter}}}3}$
  }}

\medskip

{\footnotesize
 \begin{center}

$^1$SIGMA Weather Group, State Key Laboratory of Space Weather,\\

National Space Science Center, Chinese Academy of Sciences,\\

Beijing 100190, People's Republic of China

\end{center}
} 

\medskip

{\footnotesize
 \begin{center}

$^2$College of Earth and Planetary Sciences,University of Chinese Academy of Sciences,\\

Beijing 100049, People's Republic of China

\end{center}
}

\medskip

{\footnotesize

\begin{center}

$^3$School of Earth and Space Sciences, Peking University, \\

Beijing 100871, People’s Republic of China

\end{center}

}

\bigskip

 \centerline{(Communicated by Handling Editor)}


\begin{abstract}
Bernoulli free boundary problem is numerically solved via shape optimization that minimizes a cost functional subject to state problems constraints.
In \cite{1}, an energy-gap cost functional was formulated based on two auxiliary state problems,
with existence of optimal solution attempted through continuity of state problems with respect to the domain.
Nevertheless, there exists a corrigendum in Eq.(48) in \cite{1},
where the boundedness of solution sequences for state problems with respect to the domain cannot be directly estimated via the Cauchy-Schwarz inequality as \textbf{Claimed}.
In this comment, we rectify this proof by Poincaré-Friedrichs inequality.
\end{abstract}


\section{Corrigendum}
Firstly, we show the Bernoulli free boundary problem and its corresponding shape optimization problem.
Bernoulli free boundary problem is formulated as follows:
\begin{definition} (Bernoulli free boundary problem): 
Let $\Omega_{0}$ and $\Omega_{1}$ be bounded and connected domains in $\mathbb{R}^{2}$ such that $\overline{\Omega_{1}}\subset\Omega_{0}$, 
and define $\Omega\coloneqq\Omega_{0}\setminus\overline{\Omega_{1}}$ possessing a moving boundary $\partial\Omega_{0}\coloneqq\Sigma$ and a fixed boundary $\partial\Omega_{1}\coloneqq\Gamma$. 
The exterior Bernoulli free boundary problem is considered as an overdetermined boundary value problem for a given positive constant $\lambda$, 
which implies that finding $u\in H^1{\left(\Omega\right)}$ and unknown external boundary $\Sigma$ satisfy:
  \begin{equation} 
  \begin{cases}
  -\Delta u=0 & \text{in $\Omega$,} \\
  u=1 & \text{on $\Gamma$,} \\
  u=0 & \text{on $\Sigma$,} \\
  -\partial_{\mathbf{n}} u=\lambda & \text{on $\Sigma$,} \\
  \end{cases} 
  \label{con:Eq.1}
  \end{equation}
where $\partial_{\mathbf{n}} u\coloneqq \nabla u \cdot \mathbf{n}$ and $\mathbf{n}$ is unit normal vector directed into the complement of $\Omega$.
\label{Bernoulli free boundary problem}
\end{definition}
The existence of solution for Eq.(\ref{con:Eq.1}) has been established by variational method in \cite{2}. 
Bernoulli free boundary problem is numerically solved via shape optimization that minimizes a cost functional subject to state problems constraints.
In \cite{1}, an energy-gap cost functional was formulated based on two auxiliary state problems as follows:
\begin{definition} (Shape Optimization):
The shape optimization requires to minimize the cost functional $\mathcal{J}\left(\Omega\right)$ over admissible domains $\mathcal{O}_{ad}$. 
The cost functional $\mathcal{J}$ is formulated as:
$\mathcal{J}\left(\Omega\right)=\int_{\Omega}\left|\nabla\left(u_{N}-u_{R}\right)\right|^{2}dx$,
where
\begin{subequations}\label{con:Eq.2}
     \begin{numcases}{\mbox{$u_{N},u_{R}$ is the solution of }~:}
         \text{$u_{N}\in H^1\left(\Omega\right)$}  
         \begin{cases}
          -\Delta u_{N}=0 & \text{in $\Omega$,} \\
          u_{N}=1 & \text{on $\Gamma$,} \\
          \partial_{\mathbf{n}} u_{N}=\lambda & \text{on $\Sigma$,} \\
          \end{cases}
          \label{con:Eq.2.1}\\
          \text{$u_{R}\in H^1\left(\Omega\right)$}   \label{con:Eq.2.2}
         \begin{cases}
          -\Delta u_{R}=0 & \text{in $\Omega$,} \\
          u_{R}=1 & \text{on $\Gamma$,} \\
          \partial_{\mathbf{n}} u_{R}+\beta u_{R} =\lambda & \text{on $\Sigma$,} \\
         \end{cases} 
     \end{numcases}
\end{subequations}
where $\beta$ is a positive constant. 

\label{Shape optimization problem}
\end{definition}
The definition of admissible domains $\mathcal{O}_{ad}$ can be found in references \cite{1,3}.
The variational formulations of two auxiliary state problems are formulated the following lemma:
\begin{lemma} \rm (Variational Formulations):
The variational formulations of Eq.\ref{con:Eq.2.1} and Eq.\ref{con:Eq.2.2} can be expressed as follows:
  \begin{equation} 
  \begin{cases}
  \text{find $w_{N}=u_{N}-u_{N_{0}}\in H_{\Gamma}^{1}\left(\Omega\right)$, such that}\\
  a\left(w_{N},\phi\right)=\int_{\Sigma}\lambda\phi d\sigma-\int_{\Omega}\nabla u_{N_{0}}\nabla \phi dx, \forall \phi\in H_{\Gamma}^{1}\left(\Omega\right),\\
  \end{cases} 
  \label{con:Eq.3.1}
  \end{equation}
  \begin{equation} 
  \begin{cases}
  \text{find $w_{R}=u_{R}-u_{R_{0}}\in H_{\Gamma}^{1}\left(\Omega\right)$, such that}\\
  a\left(w_{R},\phi\right)+\beta a_{\Sigma}\left(w_{R},\phi\right)=\int_{\Sigma}\lambda\phi d\sigma-\int_{\Omega}\nabla u_{R_{0}}\nabla \phi dx, \forall \phi\in H_{\Gamma}^{1}\left(\Omega\right), \\
  \end{cases} 
  \label{con:Eq.3.2}
  \end{equation}
where $H_{\Gamma}^{1}\left(\Omega\right)=\left\{v\in H^{1}\left(\Omega\right):v|_{\Gamma}=0\right\}$ equipped with a norm
\begin{equation} 
  \Vert v\Vert_{H_{\Gamma}^{1}\left(\Omega\right)}=\left(\int_{\Omega}\left|\nabla v\right|^2dx\right)^{\frac{1}{2}},
\label{con:Eq.5}
\end{equation}
and $u_{N_{0}},u_{R_{0}}$ are two fixed functions in $H^{1}\left(U\right)$ such that $u_{N_{0}}=u_{R_{0}}=1$.
$U$ is a fixed, connected and bounded open subset such that $\forall \Omega \in \mathcal{O}_{ad}, \Omega\subset U$.
$a\left(\cdot,\cdot\right),a_{\Sigma}\left(\cdot,\cdot\right)$ are bilinear functionals on $H_{\Gamma}^{1}\left(\Omega\right)\times H_{\Gamma}^{1}\left(\Omega\right)$ by these definitions:
\begin{equation} 
  a\left(w,v\right)=\int_{\Omega}\nabla w\nabla v dx,
\label{con:Eq.6}
\end{equation}
\begin{equation} 
  a_{\Sigma}\left(w,v\right)=\int_{\Sigma}\gamma_{0}\left(w\right)\cdot \gamma_{0}\left(v\right) dx,
\label{con:Eq.7}
\end{equation}
where $w,v\in H_{\Gamma}^{1}\left(\Omega\right)$, $\gamma_{0}:H^{1}\left(\Omega\right)\rightarrow H^{\frac{1}{2}}\left(\Omega\right)$.
\label{Variational Formulations}
\end{lemma}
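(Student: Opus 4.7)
The plan is to derive both identities by the classical Galerkin recipe: multiply each strong form by an arbitrary test function $\phi\in H_\Gamma^1(\Omega)$, integrate over $\Omega$, apply Green's first identity, invoke the boundary data, and finally absorb the nonhomogeneous Dirichlet condition on $\Gamma$ by the shift $w=u-u_0$. Since $u_N,u_R\in H^1(\Omega)$ satisfy $-\Delta u=0$, the field $\nabla u$ lies in $H(\mathrm{div};\Omega)$ and its normal trace is well defined in $H^{-1/2}(\partial\Omega)$; for admissible $\Omega\in\mathcal{O}_{ad}$ (Lipschitz boundary) the surface integrals over $\Gamma$ and $\Sigma$ can then be interpreted through the trace operator $\gamma_0$ of the Lemma.

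For Eq.~(\ref{con:Eq.3.1}), testing $-\Delta u_N=0$ against $\phi$ and applying Green's identity gives
\begin{equation*}
\int_\Omega\nabla u_N\cdot\nabla\phi\,dx=\int_{\partial\Omega}\partial_{\mathbf n}u_N\,\phi\,d\sigma.
\end{equation*}
Because $\phi|_\Gamma=0$ the $\Gamma$-portion of the boundary integral vanishes, and the Neumann datum $\partial_{\mathbf n}u_N=\lambda$ on $\Sigma$ reduces the remainder to $\int_\Sigma\lambda\phi\,d\sigma$. Setting $u_N=w_N+u_{N_0}$ (so that $w_N\in H_\Gamma^1(\Omega)$ since $u_{N_0}=1=u_N$ on $\Gamma$) and using bilinearity of $a(\cdot,\cdot)$ in the first argument immediately yields the identity $a(w_N,\phi)=\int_\Sigma\lambda\phi\,d\sigma-\int_\Omega\nabla u_{N_0}\cdot\nabla\phi\,dx$.

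For Eq.~(\ref{con:Eq.3.2}), the identical Green's identity step produces $\int_\Omega\nabla u_R\cdot\nabla\phi\,dx=\int_\Sigma\partial_{\mathbf n}u_R\,\phi\,d\sigma$. Substituting the Robin condition $\partial_{\mathbf n}u_R=\lambda-\beta u_R$ and moving the $\beta u_R$-term to the left introduces the surface bilinear form $\beta\int_\Sigma\gamma_0(u_R)\gamma_0(\phi)\,d\sigma=\beta\,a_\Sigma(u_R,\phi)$. Performing the same shift $u_R=w_R+u_{R_0}$ and splitting both $a$ and $a_\Sigma$ by linearity then reproduces the claimed variational equation.

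The principal technical point to attend to is the rigorous meaning of the boundary integrals: since $u_N,u_R$ are only known to lie in $H^1(\Omega)$, the expression $\int_\Sigma\partial_{\mathbf n}u\,\phi\,d\sigma$ must be read as the duality pairing $\langle\partial_{\mathbf n}u,\gamma_0\phi\rangle_{H^{-1/2}(\Sigma),H^{1/2}(\Sigma)}$. This is legitimate because on a Lipschitz boundary the normal-trace map $\mathbf q\mapsto\mathbf q\cdot\mathbf n$ is continuous from $H(\mathrm{div};\Omega)$ into $H^{-1/2}(\partial\Omega)$, and $\nabla u\in H(\mathrm{div};\Omega)$ thanks to $\Delta u=0\in L^2(\Omega)$. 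Continuity of $\gamma_0$ also ensures that $a_\Sigma$ is well defined on $H_\Gamma^1(\Omega)\times H_\Gamma^1(\Omega)$. Everything else, namely the algebraic splitting $u=w+u_0$, linearity of $a$ and $a_\Sigma$, and the membership $w\in H_\Gamma^1(\Omega)$, is routine, so I expect the proof to be short once this trace framework is in place.
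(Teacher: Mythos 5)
The paper itself states this lemma without proof (it is quoted from \cite{1}), so there is no in-paper argument to compare against; your Green's-identity/trace derivation is the standard and essentially the only route, and for Eq.~(\ref{con:Eq.3.1}) it is complete and correct, including the careful reading of $\int_\Sigma \partial_{\mathbf n}u\,\phi\,d\sigma$ as a duality pairing via the normal trace on $H(\mathrm{div};\Omega)$.

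There is, however, a concrete gap in your treatment of Eq.~(\ref{con:Eq.3.2}). After substituting the Robin condition you correctly arrive at $a(u_R,\phi)+\beta\,a_\Sigma(u_R,\phi)=\int_\Sigma\lambda\phi\,d\sigma$, but the final sentence — that the shift $u_R=w_R+u_{R_0}$ and linearity ``reproduces the claimed variational equation'' — silently drops a term. Splitting $a_\Sigma$ by linearity gives $a_\Sigma(u_R,\phi)=a_\Sigma(w_R,\phi)+a_\Sigma(u_{R_0},\phi)$, and since $u_{R_0}\equiv 1$ its trace on $\Sigma$ is $1$, so $a_\Sigma(u_{R_0},\phi)=\int_\Sigma\phi\,d\sigma$, which does not vanish for general $\phi\in H_\Gamma^1(\Omega)$ (test functions are only required to vanish on $\Gamma$, not on $\Sigma$). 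The honest outcome of your computation is therefore
\begin{equation*}
a\left(w_R,\phi\right)+\beta\,a_\Sigma\left(w_R,\phi\right)=\int_\Sigma\lambda\phi\,d\sigma-\int_\Omega\nabla u_{R_0}\cdot\nabla\phi\,dx-\beta\int_\Sigma u_{R_0}\,\phi\,d\sigma,
\end{equation*}
i.e.\ with an extra $-\beta\int_\Sigma\phi\,d\sigma$ on the right-hand side compared with Eq.~(\ref{con:Eq.3.2}) as printed. You need to either carry this term explicitly (and note that the lemma as stated omits it — this appears to be a transcription slip in the statement rather than a flaw in your method), or explain why it is absent; as written, your argument proves a different identity from the one claimed, and asserting that it ``reproduces'' Eq.~(\ref{con:Eq.3.2}) conceals exactly the kind of dropped boundary term that this corrigendum is otherwise concerned with.
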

The existence and uniqueness of solutions for two state problems(Eq.\ref{con:Eq.3.1},Eq.\ref{con:Eq.3.2}) are verified by Lax-Milgram theorem.
To prove continuity of the state solutions $u_{N}$, $u_{R}$ with respect to the domain $\Omega$, \cite{1} defines the graph $\mathcal{F}$ as:
\begin{equation} 
  \mathcal{F}=\left\{\left(\Omega,u_{N}\left(\Omega\right),u_{R}\left(\Omega\right)\right):\Omega\in\mathcal{O}_{ad},\text{and $u_{N}, u_{R}$ satisfies Eq.\ref{con:Eq.3.1}, Eq.\ref{con:Eq.3.2} on $\Omega$.}\right\},
\label{con:Eq.8}
\end{equation}
Subsequently, the shape optimization \ref{Shape optimization problem} is represented as:
\begin{equation} 
    \underset{\mathcal{F}}{\min}J\left(\Omega,u_{N}\left(\Omega\right),u_{R}\left(\Omega\right)\right).
    \label{con:Eq.9}
\end{equation}
The existence of optimal solution for the shape optimization problem Eq.\ref{con:Eq.9} reduces to proving compactness of $\mathcal{F}$ and lower semi-continuity of $\mathcal{J}$.
However, in \cite{1}'s proof for the compactness of $\mathcal{F}$, 
there exists a corrigendum in the boundedness for the sequence ${u_{R_k}}$ (Eq.48 in \cite{1}).
Specifically, erroneous step of Eq.(48) in \cite{1}:\\
\quad Let $\phi=u_{R_{k}}\in H^{1}\left(\Omega_{k}\right)$, then variational formulation Eq.\ref{con:Eq.3.1} is:
\begin{equation}
  \int_{\Omega_{k}}\nabla u_{R_{k}}\cdot \nabla u_{R_{k}}dx=-\int_{\Omega_{k}}\nabla u_{R_{0}}\cdot \nabla u_{R_{k}}dx-\int_{\Sigma_{k}}\beta \left|u_{R_{k}}\right|^{2}d\sigma+\int_{\Sigma_{k}}\lambda u_{R_{k}}d\sigma,
  \label{con:Eq.10}
\end{equation}
where $\beta,\lambda>0$, and solution sequence $u_{R_{k}}$ was constructed in \cite{1} for verifying the compactness of $\mathcal{J}$ based on weak convergence of solution sequences in $H^{1}\left(U\right)$.
Then, \cite{1} uses the Cauchy-Buniakowsky-Schwarz inequality to estimate this formula:
\begin{equation}
  \Vert u_{R_{k}} \Vert _{H_{\Gamma}^{1}\left(\Omega_{k}\right)}^{2}\leq \Vert u_{R_{0}} \Vert _{H^{1}\left(U\right)}\cdot \Vert u_{R_{k}} \Vert _{H_{\Gamma}^{1}\left(\Omega_{k}\right)}+\boxed{\max\left\{\beta,\lambda\left|U\right|^{\frac{1}{2}}\right\}\cdot\Vert u_{R_{k}} \Vert _{L^{2}\left(\Sigma_{k}\right)}}.
  \label{con:Eq.11}
\end{equation}
\\
\textbf{Incorrectness:}
\begin{equation}
  \int_{\Sigma_{k}}\beta \left|u_{R_{k}}\right|^{2}d\sigma=\beta\Vert u_{R_{k}}\Vert_{L^{2}\left(\Sigma_{k}\right)}^{2},
  \label{con:Eq.12}
\end{equation}
\begin{equation}
  \int_{\Sigma_{k}}\lambda \left|u_{R_{k}}\right|d\sigma\leq \lambda \left(\int_{\Sigma_{k}}1^{2}d\sigma\right)^{\frac{1}{2}}\cdot\left(\int_{\Sigma_{k}}\left|u_{R_{k}}\right|^{2}d\sigma\right)^{\frac{1}{2}}\leq \lambda\left|U\right|^{\frac{1}{2}}\Vert u_{R_{k}} \Vert _{L^{2}\left(\Sigma_{k}\right)},
  \label{con:Eq.13}
\end{equation}
However, the sum of Eq.\ref{con:Eq.12} and Eq.\ref{con:Eq.13} is not equal to the boxed estimation in Eq.\ref{con:Eq.11}, 
as this boxed estimation neglects the squared term in Eq.\ref{con:Eq.12}.
This incorrectness is critical as the following derivation in \cite{1} determines the subsequent compactness proof of $\mathcal{F}$. 
We rectify it through estimation via the Poincaré-Friedrichs inequality, 
which aligns with the coercivity proof for the bilinear form $a+a_{\Sigma}$.
\begin{lemma} \rm (Poincaré-Friedrichs inequality):
Let $\Omega$ be a bounded and connected domain with smooth orientable boundary $\partial\Omega$.
For $\Sigma\subset\partial\Omega$ and $m\left(\Sigma\right)>0$, there exists a positive constant $C$ such that:
\begin{equation}
  \Vert v\Vert_{H^{1}\left(\Omega\right)}\leq C\left(\left|\int_{\Sigma} v d\sigma\right|+\left|v\right|_{H^{1}\left(\Omega\right)}\right),\quad\forall\ v\in H^{1}\left(\Omega\right),
  \label{con:Eq.14}
\end{equation}
where $m$ is the Lebesgue measure and the positive constant $C$ depends only on $\Omega$ and $\Sigma$.
\label{Poincaré-Friedrichs inequality}
\end{lemma}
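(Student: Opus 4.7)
The plan is to prove the Poincaré-Friedrichs inequality by the standard Rellich--Kondrachov compactness-plus-contradiction argument. Suppose the inequality fails. Then for every integer $n\geq 1$ there exists $v_n\in H^1(\Omega)$ such that
\begin{equation*}
\|v_n\|_{H^1(\Omega)} > n\left(\left|\int_\Sigma v_n\, d\sigma\right| + |v_n|_{H^1(\Omega)}\right).
\end{equation*}
After the normalization $\tilde v_n := v_n/\|v_n\|_{H^1(\Omega)}$, one obtains a sequence $\{\tilde v_n\}\subset H^1(\Omega)$ with $\|\tilde v_n\|_{H^1(\Omega)}=1$ and
\begin{equation*}
\left|\int_\Sigma \tilde v_n\, d\sigma\right|+|\tilde v_n|_{H^1(\Omega)} < \frac{1}{n}\longrightarrow 0.
\end{equation*}

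Next, I would exploit the boundedness of $\{\tilde v_n\}$ in $H^1(\Omega)$. Since $\Omega$ is bounded with smooth boundary, the Rellich--Kondrachov theorem supplies a subsequence (still denoted $\tilde v_n$) converging weakly in $H^1(\Omega)$ and strongly in $L^2(\Omega)$ to some limit $v\in H^1(\Omega)$. Because $|\tilde v_n|_{H^1(\Omega)}=\|\nabla \tilde v_n\|_{L^2(\Omega)}\to 0$, the gradients converge strongly to $0$ in $L^2(\Omega)$, so $\tilde v_n\to v$ strongly in $H^1(\Omega)$ and $\nabla v\equiv 0$ a.e.\ in $\Omega$. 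Since $\Omega$ is connected, $v$ must be a constant $c$.

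To identify $c$, I would invoke continuity of the trace operator $\gamma_0: H^1(\Omega)\to L^2(\partial\Omega)$: from $\tilde v_n\to v=c$ in $H^1(\Omega)$ one deduces $\gamma_0(\tilde v_n)\to c$ in $L^2(\Sigma)$, and therefore
\begin{equation*}
\int_\Sigma \tilde v_n\, d\sigma \longrightarrow c\, m(\Sigma).
\end{equation*}
But the left-hand side tends to $0$ by construction, and $m(\Sigma)>0$ by hypothesis, forcing $c=0$. This contradicts $\|\tilde v_n\|_{H^1(\Omega)}=1$, and the inequality follows with some constant $C=C(\Omega,\Sigma)$.

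The main obstacles are mostly technical rather than conceptual: one must invoke the smooth boundary hypothesis at two places---once for Rellich--Kondrachov compactness $H^1(\Omega)\hookrightarrow\hookrightarrow L^2(\Omega)$, and once for the continuous trace $H^1(\Omega)\to L^2(\partial\Omega)$ used to pass to the limit in the boundary integral---and one must carefully exploit connectedness of $\Omega$ to promote $\nabla v=0$ to $v\equiv c$. None of these steps is delicate for the smooth domains considered in $\mathcal{O}_{ad}$, so the compactness/contradiction scheme closes cleanly.
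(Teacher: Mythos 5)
Your compactness-and-contradiction argument is correct and complete: the normalization, the Rellich--Kondrachov extraction, the upgrade to strong $H^1$ convergence via $\|\nabla\tilde v_n\|_{L^2}\to 0$, the use of connectedness to get $v\equiv c$, and the trace-continuity step forcing $c\,m(\Sigma)=0$ all fit together, and the final contradiction with $\|\tilde v_n\|_{H^1(\Omega)}=1$ closes the argument. Note, however, that the paper does not prove this lemma at all --- it is stated as a known auxiliary result (the classical generalized Poincar\'e--Friedrichs inequality) and immediately applied in the proof of Theorem \ref{Correction for boundedness}, so there is no in-paper proof to compare against; yours is the standard proof one would cite. One cosmetic remark: the statement calls $m$ the Lebesgue measure, but for $\Sigma\subset\partial\Omega$ the relevant quantity is the induced surface measure, which is what your trace argument implicitly (and correctly) uses when concluding $\int_\Sigma\tilde v_n\,d\sigma\to c\,m(\Sigma)$.
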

In the following theorem, We correctly prove boundedness for the solution sequence $u_{R_{k}}$ by employing Lemma \ref{Poincaré-Friedrichs inequality}.
\begin{theorem}(Correction for boundedness)\label{Correction for boundedness}
  There exists a positive constant $C$ such that $\Vert u_{R_{k}}\Vert_{H^{1}\left(\Omega_{k}\right)}\leq C$.
\end{theorem}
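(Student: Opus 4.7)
The plan is to start from the energy identity~\eqref{con:Eq.10}, move the non-negative $\beta$-boundary term to the left-hand side so that the full coercive quantity $|u_{R_k}|_{H^1(\Omega_k)}^2+\beta\Vert u_{R_k}\Vert_{L^2(\Sigma_k)}^2$ appears there, then use Cauchy--Schwarz and Young's inequality on the right-hand side to absorb a fraction of each coercive term. This will produce simultaneous $k$-uniform bounds on both the seminorm $|u_{R_k}|_{H^1(\Omega_k)}$ and the trace $\Vert u_{R_k}\Vert_{L^2(\Sigma_k)}$, at which point Lemma~\ref{Poincaré-Friedrichs inequality} will upgrade the seminorm bound to a full $H^1$ bound.

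Concretely, I would first rewrite~\eqref{con:Eq.10} as
\begin{equation*}
|u_{R_k}|_{H^1(\Omega_k)}^2+\beta\Vert u_{R_k}\Vert_{L^2(\Sigma_k)}^2 \;=\; -\int_{\Omega_k}\nabla u_{R_0}\cdot\nabla u_{R_k}\,dx + \lambda\int_{\Sigma_k}u_{R_k}\,d\sigma.
\end{equation*}
Cauchy--Schwarz together with $\Omega_k\subset U$ bounds the first right-hand term by $\Vert u_{R_0}\Vert_{H^1(U)}\cdot|u_{R_k}|_{H^1(\Omega_k)}$ and the second by $\lambda|U|^{1/2}\Vert u_{R_k}\Vert_{L^2(\Sigma_k)}$. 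The weighted Young inequality $ab\le \tfrac{\epsilon}{2}a^2+\tfrac{1}{2\epsilon}b^2$, applied with $\epsilon=1$ in the first product and $\epsilon=\beta$ in the second, absorbs $\tfrac{1}{2}|u_{R_k}|_{H^1(\Omega_k)}^2$ and $\tfrac{\beta}{2}\Vert u_{R_k}\Vert_{L^2(\Sigma_k)}^2$ back into the left-hand side, leaving a bound depending only on $\Vert u_{R_0}\Vert_{H^1(U)}$, $|U|$, $\beta$, $\lambda$. This is essentially the coercivity of $a+\beta a_\Sigma$ referenced in the corrigendum, and it delivers simultaneous $k$-uniform control of the seminorm and the surface $L^2$ norm.

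For the final step I would invoke Lemma~\ref{Poincaré-Friedrichs inequality} with the boundary subset chosen as the fixed inner boundary $\Gamma$: because $u_{R_k}|_\Gamma=1$, the boundary integral equals the constant $|\Gamma|$, and combined with the just-established seminorm bound this immediately yields $\Vert u_{R_k}\Vert_{H^1(\Omega_k)}\le C$. The main obstacle is the tacit $\Omega_k$-dependence of the Poincaré--Friedrichs constant: without a uniform regularity hypothesis on $\mathcal{O}_{ad}$ (a uniform cone or Lipschitz condition of the kind imposed in~\cite{3}) this constant could in principle blow up as the geometry of $\Omega_k$ degenerates, and the proof implicitly rests on precisely such an assumption to extract a single $C$ valid for every admissible domain. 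Once that uniformity is granted, the remainder is the routine energy estimate outlined above.
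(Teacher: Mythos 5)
Your proposal is correct and follows the same overall strategy as the paper's proof: move the non-negative term $\beta\Vert u_{R_k}\Vert_{L^2(\Sigma_k)}^2$ to the left so that the coercive form $a+\beta a_\Sigma$ appears there, and then invoke Lemma~\ref{Poincaré-Friedrichs inequality} to convert seminorm control into full $H^1$ control. The execution differs in two respects. First, the paper keeps the right-hand side linear in the unknown, arriving at $C_2\Vert u_{R_k}\Vert_{H^1(\Omega_k)}^2\le C_3\Vert u_{R_k}\Vert_{H^1(\Omega_k)}$ and dividing through, whereas you absorb via Young's inequality; both work, and yours has the minor advantage of yielding separate explicit bounds on $|u_{R_k}|_{H^1(\Omega_k)}$ and $\Vert u_{R_k}\Vert_{L^2(\Sigma_k)}$ without the trace-constant step hidden in the paper's Eq.~\eqref{con:Eq.21}. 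Second, the paper applies the Poincar\'e--Friedrichs inequality on the moving boundary $\Sigma_k$ (after first converting $\Vert u_{R_k}\Vert_{L^2(\Sigma_k)}^2$ into $|\int_{\Sigma_k}u_{R_k}\,d\sigma|^2$ by Cauchy--Schwarz, which is where the constant $C_1=\min\{1,\beta\, m(\Sigma_k)^{-1}\}$ acquires its $k$-dependence), while you apply it on the fixed boundary $\Gamma$, where the mean-value term is a fixed constant; this is a cleaner choice, though one small caveat is that the quantity tested in Eq.~\eqref{con:Eq.10} is really $w_{R_k}=u_{R_k}-u_{R_0}\in H^1_\Gamma(\Omega_k)$, whose trace on $\Gamma$ is $0$ rather than $1$ — which only makes your step easier. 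Finally, you are right to flag that both the paper's constants $C_1$, $C_2$ and any Poincar\'e--Friedrichs constant still depend on $\Omega_k$ (and, in the paper's version, on $m(\Sigma_k)$), so a single $k$-independent $C$ requires the uniform regularity built into $\mathcal{O}_{ad}$; the paper passes over this point in silence, and your proposal is more careful on exactly the issue the corrigendum is meant to repair.
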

\begin{proof}
  For the variational equation Eq.\ref{con:Eq.10}, we rearrange terms in the equation as follows:
  \begin{equation}
  \int_{\Omega_{k}}\nabla u_{R_{k}}\cdot \nabla u_{R_{k}}dx+\int_{\Sigma_{k}}\beta \left|u_{R_{k}}\right|^{2}d\sigma=-\int_{\Omega_{k}}\nabla u_{R_{0}}\cdot \nabla u_{R_{k}}dx+\int_{\Sigma_{k}}\lambda u_{R_{k}}d\sigma.
  \label{con:Eq.15}
  \end{equation}
  The left terms can be formulated by:
  \begin{equation}
  \int_{\Omega_{k}}\nabla u_{R_{k}}\cdot \nabla u_{R_{k}}dx+\int_{\Sigma_{k}}\beta \left|u_{R_{k}}\right|^{2}d\sigma=a\left(u_{R_{k}},u_{R_{k}}\right)+\beta a_{\Sigma}\left(u_{R_{k}},u_{R_{k}}\right).
  \label{con:Eq.16}
  \end{equation}
  The left terms can be estimated via Cauchy-Buniakowsky-Schwarz inequality:
  \begin{equation}
  \int_{\Omega_{k}}\left|\nabla u_{R_{k}}\right|^{2}dx+\int_{\Sigma_{k}}\beta \left|u_{R_{k}}\right|^{2}d\sigma\geq C_{1}\left(\int_{\Omega_{k}}\left|\nabla u_{R_{k}}\right|^{2}dx+\left|\int_{\Sigma_{k}}u_{R_{k}}d\sigma\right|^{2}\right),
  \label{con:Eq.17}
  \end{equation}
  where $C_{1}=\min\left\{1,\beta\cdot m\left(\Sigma_{k}\right)^{-1}\right\}$.
  The quadratic mean can be directly estimated as follows:
  \begin{equation}
  \left(\int_{\Omega_{k}}\left|\nabla u_{R_{k}}\right|^{2}dx+\left|\int_{\Sigma_{k}}u_{R_{k}}d\sigma\right|^{2}\right)\geq\frac{1}{2}\left(\left|u_{R_{k}}\right|_{H^{1}\left(\Omega_{k}\right)}+\left|\int_{\Sigma_{k}}u_{R_{k}}d\sigma\right|\right)^{2}.
  \label{con:Eq.18}
  \end{equation}
  Hence, by the Poincaré-Friedrichs inequality, we derive the following estimation:
  The left terms can be formulated by:
  \begin{equation}
  \int_{\Omega_{k}}\nabla u_{R_{k}}\cdot \nabla u_{R_{k}}dx+\int_{\Sigma_{k}}\beta \left|u_{R_{k}}\right|^{2}d\sigma\geq C_{2}\Vert u_{R_{k}}\Vert_{H^{1}\left(\Omega_{k}\right)}^{2},
  \label{con:Eq.19}
  \end{equation}
  where $C_{2}=\frac{1}{2}C_{1}$.
  Then we formulate this estimation:
  \begin{equation}
  C_{2}\Vert u_{R_{k}}\Vert_{H^{1}\left(\Omega_{k}\right)}^{2}\leq \Vert u_{R_{0}} \Vert _{H^{1}\left(U\right)}\cdot \Vert u_{R_{k}} \Vert _{H_{\Gamma}^{1}\left(\Omega_{k}\right)}+\lambda\left|U\right|^{\frac{1}{2}}\cdot\Vert u_{R_{k}} \Vert _{L^{2}\left(\Sigma_{k}\right)},
  \label{con:Eq.20}
  \end{equation}
  Hence, we use the Cauchy inequality:
  \begin{equation}
  \Vert u_{R_{0}} \Vert _{H^{1}\left(U\right)}\cdot \Vert u_{R_{k}} \Vert _{H_{\Gamma}^{1}\left(\Omega_{k}\right)}+\lambda\left|U\right|^{\frac{1}{2}}\cdot\Vert u_{R_{k}} \Vert _{L^{2}\left(\Sigma_{k}\right)}\leq C_{3}\Vert u_{R_{k}}\Vert_{H^{1}\left(\Omega_{k}\right)},
  \label{con:Eq.21}
  \end{equation}
  where $C_{3}=\left(\Vert u_{R_{0}} \Vert _{H^{1}\left(U\right)}^2+\lambda^{2}m\left(U\right)\right)^{\frac{1}{2}}$.
  That means the boundedness of $u_{R_{k}}$:
  \begin{equation}
  \Vert u_{R_{k}}\Vert_{H^{1}\left(\Omega_{k}\right)}\leq C,
  \label{con:Eq.22}
  \end{equation}
  where $C=C_{3}\cdot C_{2}^{-1}$.
\end{proof}
As above, we have completed the correction of erroneous boundedness estimation in \cite{1}. 
This now can directly use the Theorem \ref{Correction for boundedness}, 
and enables domain extension estimation based on \cite{4} as follows: 
\begin{equation}
  \Vert \tilde{u}_{R_{k}}\Vert_{H^{1}\left(U\right)}\leq \tilde{C}\Vert u_{R_{k}}\Vert_{H^{1}\left(\Omega_{k}\right)}.
  \label{con:Eq.23}
\end{equation}
Subsequent convergence estimations for solution sequences $u_{R_{k}}$ remain correct in \cite{1}. 
Hence, the existence for optimal domains can further be ensured by proving the compactness of $\mathcal{F}$ and the lower semi-continuity of $\mathcal{J}$.

\section{Conclusion}
In this comment, we identify a corrigendum in \cite{1} for the boundedness of sequence ${z_{R_{k}}}$ and present the proof using the Poincaré-Friedrichs inequality. 
This correction is essential for the compactness of $\mathcal{F}$ argument.

\section*{Acknowledgments}
The work is jointly supported by the NSFC and the National Key R\&D Program of China (42330210, 2022YFF0503800, 42004146 and 2021YFA0718600),
and the Specialized Research Fund for State Key Laboratories. 
The authors gratefully acknowledge these supports.









\medskip
Received xxxx 20xx; revised xxxx 20xx; early access xxxx 20xx.
\medskip

\end{document}